    \newcommand\blfootnote[1]{%
  \begingroup
  \renewcommand\thefootnote{}\footnote{#1}%
  \addtocounter{footnote}{-1}%
  \endgroup
}
  \newcommand{\Addresses}{{
  \bigskip
  \footnotesize

\textsc{London School of Geometry and Number Theory, UCL, Department of Mathematics, Gower street, WC1E 6BT, London, UK}\par\nopagebreak
  \textit{E-mail address}, G.~Baldi: \texttt{gregorio.baldi.16@ucl.ac.uk}

}}
\newcommand{\PP}{\mathbb{P}}
\newtheorem{thm}{Theorem}[section]
 \newtheorem{cor}[thm]{Corollary}
 \newtheorem{lemma}[thm]{Lemma}
 \newtheorem{prop}[thm]{Proposition}
 \theoremstyle{definition}
 \newtheorem{defi}[thm]{Definition}
 \theoremstyle{remark}
 \newtheorem{rmk}[thm]{Remark}
  \newtheorem{question}[thm]{Question}
 \numberwithin{equation}{section}
\newcommand{\NS}{\operatorname{NS}}
\DeclareMathOperator{\End}{End}
\DeclareMathOperator{\Gal}{Gal}
\DeclareMathOperator{\Gl}{GL}
\newcommand{\Aut}{\operatorname{Aut}}
\newcommand{\N}{\mathbb{N}}
\newcommand{\Z}{\mathbb{Z}}
\newcommand{\Q}{\mathbb{Q}}
\newcommand{\Oo}{\mathcal{O}}
\newcommand{\C}{\mathbb{C}}
\newcommand{\Qbar}{\overline{\mathbb{Q}}}
\newcommand{\frv}{\text{Frob}_v}
\def\@settitle{\begin{center}%
  \baselineskip14\p@\relax
  \bfseries
  \uppercasenonmath\@title
  \@title
  \ifx\@subtitle\@empty\else
     \\[1ex]\uppercasenonmath\@subtitle
     \footnotesize\mdseries\@subtitle
  \fi
  \end{center}%
}
\def\subtitle#1{\gdef\@subtitle{#1}}
\def\@subtitle{}
\begin{document}

\newcommand{\adjunction}[4]{\xymatrix@1{#1{\ } \ar@<-0.3ex>[r]_{ {\scriptstyle #2}} & {\ } #3 \ar@<-0.3ex>[l]_{ {\scriptstyle #4}}}}

\title{Local to global principle for the moduli space of K3 surfaces}\blfootnote{\emph{Date}. November 30, 2018.}\blfootnote{\emph{2010 Mathematics Subject Classification}. 11G35, 14G35, 11F80.}\blfootnote{\emph{Key words and phrases}. K3 surfaces, Galois representations, Fontaine--Mazur conjecture.}
\author{Gregorio Baldi}

\begin{abstract}
Recently S. Patrikis, J.F. Voloch and Y. Zarhin have proven, assuming several well known conjectures, that the finite descent obstruction holds on the moduli space of principally polarised abelian varieties. We show an analogous result for K3 surfaces, under some technical restrictions on the Picard rank. This is possible since abelian varieties and K3s are quite well described by `Hodge-theoretical' results. In particular the theorem we present can be interpreted as follows: a family of $\ell$-adic representations that \emph{looks like} the one induced by the transcendental part of the $\ell$-adic cohomology of a K3 surface (defined over a number field) determines a Hodge structure which in turn determines a K3 surface (which may be defined over a number field).
\end{abstract}
\maketitle

\tableofcontents
\section{Introduction}
Let $X$ be an algebraic K3 surface defined over a number field $K$, and $\ell$ a rational prime. We consider $T_\ell(X_{\overline{K}})$ the transcendental part of the second $\ell$-adic cohomological group of $X$, i.e. $T_\ell(X_{\overline{K}})$ is the orthogonal complement of the image of the N\'{e}ron-Severi group of $X_{\overline{K}}=X \times _K \overline{K}$ in $H^2_{\text{et}}(X_{\overline{K}},\Q_\ell)$. It is a free $\Q_\ell$-module of rank $22-\rho$, where $\rho \in \{1,2,\dots, 20\}$ denotes the rank of the N\'{e}ron-Severi group of $X_{\overline{K}}$, usually called the (geometric) Picard rank of $X$. For every rational prime $\ell$, there is a continuous $\ell$-adic Galois representation of the absolute Galois group of $K$
\begin{displaymath}
\rho_{X,\ell}: \Gal(\overline{K}/ K) \to \Gl (T_\ell(X_{\overline{K}})).
\end{displaymath}
The family $\{\rho_{X,\ell}\}_\ell$ encodes many algebro-geometric properties of $X$ that can expressed in the language of representation theory.

The problem discussed in this note is motivated by the following question, which can be thought as a refinement of the Fontaine--Mazur conjecture (\cite[Conjecture 1]{MR1363495}), since it aims to describe the essential image of the $\ell$-adic realisations of K3 surfaces. 
\begin{question}\label{question1}
Given a family of $\ell$-adic representations of the absolute Galois group of a number field $K$, can we \emph{understand} if it is of the form $\{\rho_{X,\ell}\}_\ell$ for some K3 surface $X/K$ (possibly after a finite field extension $L/K$)?
\end{question}
Galois representations coming from the cohomology of (smooth projective) varieties satisfy a number of constraints that are best understood when formulated in the language of $p$-adic Hodge theory. Indeed a deep result of Faltings, \cite[Chapter III, Theorem 4.1]{MR924705}, shows that if $Y$ is a proper and smooth variety over a $p$-adic field $K_v$, then the cohomology groups $H_{\text{et}}^i(Y\times \overline{K_v},\Q_p)$ give rise to de Rham representations. For an accessible introduction to the notions of $p$-adic Hodge theory (such as de Rham representations and their Hodge--Tate weights), we refer the reader to \cite{comparison} (in particular sections I.2 and II.6) and the monograph \cite{fontouyang}.

The analogous of Question \ref{question1} for abelian varieties has been addressed and solved, assuming several well known conjectures, in \cite[Theorem 3.1]{MR3544295} (at least for abelian varieties with endomorphism ring equal to $\Z$). More precisely the authors proved the following: 
\begin{thm}[Patrikis, Voloch, Zarhin] \label{3authors}
Assume the Hodge, Tate, Fontaine--Mazur and the semisimplicity conjecture. Let 
\begin{displaymath}
\{\rho_\ell:\Gal(\overline{K}/ K)\to \Gl_{2N}(\Q_\ell)\}_\ell
\end{displaymath}
be a weakly compatible family (in the sense of Definition \ref{weakcomp}) of $\ell$-adic representations such that:
\begin{itemize}
\item[i)] For some prime $\ell_0$, $\rho_{\ell_0}$ is de Rham at all places of $K$ above $\ell_0$;
\item[ii)] For some prime $\ell_1$, $\rho_{\ell_1}$ is absolutely irreducible;
\item[iii)] For some prime $\ell_2$ and at least one place $v$ above $\ell_2$, ${\rho_{\ell_2}}_{ | \Gal(\overline{K}_v/ K_v)}$ is de Rham with Hodge--Tate weights $-1,0$ each with multiplicity $N$, where $K_v$ denotes the completion of $K$ at the place $v$.
\end{itemize}
Then there exists a $N$-dimensional abelian variety $A$ defined over $K$ such that $\rho_\ell\cong V_\ell(A)$ for all $\ell$, where $V_\ell(A)$ denotes the rational $\ell$-adic Tate module of $A$ with its natural Galois action.
\end{thm}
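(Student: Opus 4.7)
My strategy is to apply the Fontaine--Mazur conjecture to realise the family geometrically, pin down the relevant cohomological piece using the Hodge--Tate data, and then invoke the Hodge conjecture to extract an abelian variety of dimension $N$.

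First, I would apply the Fontaine--Mazur conjecture to $\rho_{\ell_0}$: condition (i) supplies the de Rham hypothesis, and the weakly compatible family hypothesis supplies unramifiedness outside a finite set. This exhibits $\rho_{\ell_0}$ as a subquotient of $H^i_{\mathrm{et}}(Y_{\bar K}, \Q_{\ell_0})(j)$ for some smooth projective $Y/K$ and integers $i, j$. By the semisimplicity conjecture every $\rho_\ell$ is semisimple, and combined with condition (ii) and the equality of Frobenius characteristic polynomials across the family (via Chebotarev density), each $\rho_\ell$ is in fact absolutely irreducible; hence ``subquotient'' can be improved to ``direct summand''.

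Next, I would pin down $i$ and $j$ from the Hodge--Tate data. Since the family is weakly compatible and, by the previous step, comes from cohomology, the $p$-adic comparison theorem forces the HT weights to agree across the family. Hence condition (iii) implies that the HT weights of $\rho_{\ell_0}$ at places above $\ell_0$ are $-1, 0$, each with multiplicity $N$. Since the HT weights of $H^i(Y)(j)$ lie in $\{-j, -j+1, \ldots, i-j\}$, the only compatible choice is $i = j = 1$, and so $\rho_{\ell_0}$ is a direct summand of $H^1_{\mathrm{et}}(Y_{\bar K}, \Q_{\ell_0})(1)$.

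Finally, I would promote this summand to a motivic piece: by the Hodge and Tate conjectures, the projector cutting out $\rho_{\ell_0}$ is induced by a $K$-rational algebraic cycle on $Y \times Y$, yielding a polarisable rational sub-Hodge structure $M \subset H^1(Y_\C)(1)$ of type $(-1,0)+(0,-1)$ with $\dim_\Q M = 2N$. Riemann's classification identifies $M$ (up to isogeny) with an abelian variety $A/K$ of dimension $N$; Faltings' isogeny theorem together with weak compatibility then upgrades $V_{\ell_0}(A) \cong \rho_{\ell_0}$ to $V_\ell(A) \cong \rho_\ell$ for every $\ell$. I expect the main obstacle to lie in this last step: producing the projector, and hence the abelian variety $A$, defined over $K$ itself (not merely over $\bar K$) requires the joint content of the Hodge, Tate and semisimplicity conjectures, together with a careful use of Galois equivariance to perform the descent.
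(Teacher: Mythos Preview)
The paper does not itself prove this theorem; it is quoted from \cite{MR3544295}. However, the paper's proof of the K3 analogue (Theorem~\ref{mainthm}) explicitly follows the PVZ argument, and from that outline together with the discussion in Section~\ref{fieldext} one can reconstruct what PVZ do. Your overall strategy is the right one and matches theirs: Fontaine--Mazur plus Tate produce a motive over $K$, comparison isomorphisms pin down the Hodge type of its Betti realisation, and Riemann's theorem yields an abelian variety. Two points deserve correction.

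First, your claim that the Hodge--Tate weights force $i=j=1$ is false and, more importantly, unnecessary. A direct summand of $H^i(Y)(j)$ can perfectly well have Hodge--Tate weights $\{-1,0\}$ for many pairs $(i,j)$ (e.g.\ a piece of $H^3(Y)(2)$); the containment $\{-1,0\}\subset\{-j,\dots,i-j\}$ only gives $i\geq j\geq 1$. What you actually need---and what PVZ obtain by working in the Tannakian category $\mathcal{M}_{K}$ rather than with a specific $Y$---is simply that the Betti realisation of the motive $M$ is a polarisable rational Hodge structure whose Hodge numbers match those of an abelian variety; this follows directly from the $p$-adic/de Rham/Betti comparison chain (as in equation~(\ref{weights}) and the paragraph following it), with no need to locate $M$ inside a specific $H^i(Y)(j)$.

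Second, Riemann's theorem gives you an abelian variety over $\C$, not over $K$ or even over $\bar K$. You correctly flag the descent as the crux, but your description of how to perform it is vague. The actual mechanism (recalled in Section~\ref{fieldext} of the paper) is in two steps: since $M$ is defined over $K$, all $\Aut(\C/\bar K)$-conjugates of the complex abelian variety are isogenous to it, which yields a model over a number field $L$; then PVZ take the Weil restriction $\Res_{L/K}$ of this model and use Frobenius reciprocity on the motivic side to cut out a $K$-abelian variety with the required Tate modules. Faltings' isogeny theorem is not what bridges $\ell_0$ to all $\ell$; rather, once the motive $M$ over $K$ is in hand, weak compatibility and absolute irreducibility already give $H_\ell(M)\cong\rho_\ell\otimes\overline{\Q}_\ell$ for every $\ell$ (cf.\ the first two paragraphs of Section~\ref{proof}).
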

Notice that conditions i) and iii) are satisfied by the cohomology of every abelian variety, and condition ii) holds for the generic abelian variety.

It is reasonable to expect a result of the same fashion for varieties whose geometry is well captured from cohomological invariants. For example the above theorem can not tell the difference between a curve of genus $N>1$ and its Jacobian (see \cite[Section 5]{MR3544295} for a more detailed discussion about this). From this point of view K3 surfaces (and hyperk\"{a}hler varieties) are very similar to abelian varieties. Indeed, over the complex numbers, they enjoy a Torelli type theorem (\cite{torelli}) and the surjectivity of the period map (\cite{todorov}), see Proposition \ref{lattices} for precise statements.

\subsection{Main results}
In Section \ref{proof} we will prove the following, which is the main theorem of the paper.
\begin{thm}\label{mainthm}
Assume the Tate, Fontaine--Mazur, and the Hodge conjectures. Let $\rho$ be a natural number such that such that $2< 22-\rho \leq 19$ and let 
\begin{displaymath}
\{ \rho_\ell:\Gal(\overline{K}/ K) \to \Gl_{22-\rho}(\Q_\ell)\}_\ell,
\end{displaymath}
be a weakly compatible family of $\ell$-adic representations satisfying the following conditions: 
\begin{enumerate}
\item For some prime $\ell_0$, $\rho_{\ell_0}$ is de Rham at all places of $K$ above $\ell_0$;
\item For some prime $\ell_1$, $\rho_{\ell_1}$ is absolutely irreducible;
\item For some prime $\ell_2$ and at least one place $v$ above $\ell_2$, ${\rho_{\ell_2}}_{ | \Gal(\overline{K}_v/ K_v)}$ is de Rham with Hodge--Tate weights $0,1,2$, with multiplicities, respectively, $1,20-\rho,1$.
\end{enumerate}
Then there exists a K3 surface $X$ defined over a finite extension $L/K$ with geometric Picard rank $\rho$, such that the restriction of $\rho_\ell$ to $\Gal (\overline{L} / L)$ is isomorphic to $T_\ell(X_{\overline{L}})$ for all $\ell$.
\end{thm}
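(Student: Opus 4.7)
The plan is to follow the blueprint of Theorem \ref{3authors}, with the Torelli theorem and the surjectivity of the period map for K3 surfaces (Proposition \ref{lattices}) replacing their counterparts for principally polarised abelian varieties. First I would use hypotheses (1)--(3) to invoke the Fontaine--Mazur conjecture, which makes $\{\rho_\ell\}$ geometric at enough places that each $\rho_\ell$ arises as a subquotient of the étale cohomology of a smooth projective $K$-variety. Combined with weak compatibility, this should upgrade $\{\rho_\ell\}$ to the system of $\ell$-adic realisations of a rational motive $M/K$, whose Betti realisation is a polarisable $\Q$-Hodge structure $V_B$ of weight $2$ with Hodge numbers $(1, 20-\rho, 1)$ read off from condition (3). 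The absolute irreducibility in (2) forces $V_B$ to be simple as a rational Hodge structure, so it contains no $(1,1)$-classes and is of K3 type.

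Next, I would invoke the surjectivity of the period map to produce a complex K3 surface $X_{\C}$ whose rational transcendental Hodge structure is isometric to $V_B$. The inequality $2 < 22-\rho \leq 19$ ensures that $V_B$ is a nontrivial piece and that its orthogonal complement inside the K3 lattice realises a Néron--Severi group of rank $\rho$, so the resulting K3 has geometric Picard rank exactly $\rho$. By Torelli this K3 is uniquely determined up to isomorphism.

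The main obstacle is the descent step: showing that $X_{\C}$ is defined over a finite extension $L/K$. The $\ell$-adic realisation of $M$ equips $V_B \otimes \Q_\ell$ with a continuous $\Gal(\overline{K}/K)$-action, and Torelli forces every $g \in \Gal(\overline{K}/K)$ to send $X_{\C}$ to an isomorphic polarised K3 surface; the isomorphism class of $X_{\C}$ is therefore fixed by an open subgroup of $\Gal(\overline{K}/K)$, producing a model $X$ of $X_{\C}$ over a finite extension $L/K$. Making this rigorous requires the Hodge conjecture to identify algebraic cycles on $X_{\C} \times X_{\C}^{\sigma}$ with the Hodge-theoretic isomorphisms produced by Torelli, together with a spread and specialisation argument to pass from a complex-analytic model to an algebraic one defined over $L$. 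This is the step where all three conjectures enter simultaneously and is the genuinely difficult one.

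Finally, the Tate conjecture identifies $T_\ell(X_{\overline{L}}) \subset H^2_{\et}(X_{\overline{L}}, \Q_\ell)$ as the orthogonal complement of the Néron--Severi classes, and compatibility of the Betti and $\ell$-adic realisations of $M$ yields $T_\ell(X_{\overline{L}}) \cong V_B \otimes \Q_\ell \cong \rho_\ell|_{\Gal(\overline{L}/L)}$ as $\Gal(\overline{L}/L)$-representations for every $\ell$, concluding the proof.
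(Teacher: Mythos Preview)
Your outline matches the paper's approach through the construction of the motive $M$ and the application of Proposition~\ref{lattices}, but the descent step contains a genuine gap.

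You write that ``Torelli forces every $g\in\Gal(\overline{K}/K)$ to send $X_{\C}$ to an isomorphic polarised K3 surface'' and, earlier, that ``by Torelli this K3 is uniquely determined up to isomorphism''. Both claims are false: the Torelli theorem requires a Hodge isometry of the \emph{integral} lattice $H^2(X,\Z)$, whereas all you control is the \emph{rational} transcendental part $T(X)_\Q\cong H_B(M_{|\C})$. There are in general infinitely many pairwise non-isomorphic complex K3 surfaces $Y$ with $T(Y)_\Q\cong T(X)_\Q$ (different integral lattices inside the same rational quadratic space, different embeddings into $\Lambda_{\text{K3}}$, different Fourier--Mukai partners, \dots). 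So from ${}^\sigma M_{|\C}\cong M_{|\C}$ you only get $T({}^\sigma X)_\Q\cong T(X)_\Q$, not ${}^\sigma X\cong X$. Consequently there is no reason for the isomorphism class of $X_\C$ to be fixed by any open subgroup, and the ``spread and specialise'' argument you sketch has no foothold.

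The paper circumvents this exactly by \emph{not} aiming for isomorphic conjugates. It proves instead (Lemma~\ref{countable}) that the set of K3 surfaces with a fixed rational transcendental PHS is at most countable, so the $\Aut(\C/\Qbar)$-orbit of $X_\C$ meets only countably many isomorphism classes. A criterion of Gonz\'alez-Diez (Lemma~\ref{descenttoqbar}) then says that any complex projective variety with only countably many $\Aut(\C/\Qbar)$-conjugates up to isomorphism is defined over $\Qbar$. This is Theorem~\ref{main2}, and it is the missing ingredient in your proposal.
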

The proof will actually show something more: there exists a motive $M$ defined over $K$ inducing the representations $\rho_\ell$ and a finite extension $L/K$, such that the base change of $M$ to $L$ is isomorphic to the transcendental part (in the sense of Section \ref{decmotivsurface}) of the motive of a K3 surface defined over $L$. It is not clear whether or not the extension $L/K$ is needed, more about this is discussed in Section \ref{fieldext}.

In the proof, from the motive $M/K$, we will first produce a complex (algebraic) K3 surface and descend it to a number field. This is shown in the last section, which may be of independent interest. For a complex K3 surface $X$, and an element $\sigma \in  \Aut(\C / \Qbar)$ we set
\begin{displaymath}
\leftidx{^\sigma}{X} := X \times_{\C, \sigma}\C
\end{displaymath}
for the conjugate of $X$ with respect to $\sigma$. Let $T(X)_\Q$ be the rational polarised Hodge structure given by the transcendental part of the $H^2(X(\mathbb{C}),\Q)$, i.e. the orthogonal complement of the image of $\NS(X)\otimes \Q$ in $H^2(X,\Q)$. We have
\begin{thm}\label{main2}
Let $X/\C$ be a K3 surface such that 
\begin{displaymath}
T(X)_\Q \cong T(\leftidx{^\sigma}{X})_\Q  \ \ \text{   for all   } \sigma\in  \Aut(\C / \Qbar),
\end{displaymath}
where the isomorphism is an isomorphism of rational polarised Hodge structures. Then $X$ admits a model defined over a number field, i.e. there exists a number field $L\subset \C$ and a K3 surface $Y/L$, such that $Y \times_{L}\C$ is isomorphic to $X$.
\end{thm}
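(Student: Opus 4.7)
The plan is to show that the moduli point of $(X, L_0)$, for a suitable polarisation $L_0$, has countable $\Aut(\C/\Qbar)$-orbit in the relevant moduli space, and then invoke the general principle that a $\C$-point of a scheme of finite type over $\Q$ is $\Qbar$-rational if and only if its $\Aut(\C/\Qbar)$-orbit is countable (the ``only if'' is obvious; for the converse, $\Aut(\C/\Qbar)$ acts transitively on $\C\setminus \Qbar$, so a residue field of positive transcendence degree over $\Q$ produces an uncountable orbit). Choose an ample line bundle $L_0$ on $X$ of some degree $2d$ (possible since $X$ is projective), and let $\mathcal{M}_{2d}$ be the quasi-projective coarse moduli space of polarised complex K3 surfaces of degree $2d$, which has a canonical model over $\Q$ (for instance via its description as a Shimura variety for $\mathrm{SO}(2,19)$). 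The pair $(X, L_0)$ gives rise to a point $x\in \mathcal{M}_{2d}(\C)$, and under $\sigma\in \Aut(\C/\Qbar)$ this point is sent to the class of $({}^\sigma X, {}^\sigma L_0)$.

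The heart of the proof is to bound the number of isomorphism classes of complex K3 surfaces occurring among the conjugates $\{{}^\sigma X : \sigma\in \Aut(\C/\Qbar)\}$. The hypothesis identifies all the rational polarised Hodge structures $T({}^\sigma X)_\Q$ with the fixed object $T(X)_\Q$, so it suffices to count complex K3 surfaces $Y$ with $T(Y)_\Q \cong T(X)_\Q$. Inside the fixed $\Q$-vector space $T(X)_\Q$ there are only countably many $\Z$-lattices, each carrying an integral polarised Hodge structure induced from $T(X)_\Q$. For each such lattice $T_\Z$, Nikulin's theory of primitive embeddings shows that the number of primitive embeddings $T_\Z \hookrightarrow \Lambda_{K3} := U^{\oplus 3}\oplus E_8(-1)^{\oplus 2}$ modulo $O(\Lambda_{K3})$ is finite, and such an embedding together with the Hodge structure on $T_\Z$ determines a Hodge structure of K3 type on $\Lambda_{K3}$. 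The global Torelli theorem (Proposition \ref{lattices}) then recovers $Y$ up to isomorphism from this integral datum, yielding the desired countability.

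Since each complex K3 surface has only countably many ample classes of a given degree (its N\'eron--Severi group being finitely generated), the $\Aut(\C/\Qbar)$-orbit of $x$ in $\mathcal{M}_{2d}(\C)$ is countable, and hence $x\in \mathcal{M}_{2d}(\Qbar)$. To descend $X$ itself to a number field, and not merely its moduli point, I would pass to the fine moduli space of polarised K3 surfaces with a level-$N$ structure for $N\geq 3$, a scheme of finite type over an appropriate localisation of $\Z$; lifting $x$ to this cover (possibly after a finite extension of the residue field in question) produces a K3 surface $Y$ defined over a number field $L\subset \C$ with $Y\times_L \C\cong X$. The main difficulty is the countability step: the hypothesis gives only rational Hodge-theoretic data, whereas Torelli requires integral input, so one has to verify that the integral refinements (the lattice $T(Y)_\Z$ and its primitive embedding into $\Lambda_{K3}$) actually vary over only countably many possibilities among the conjugates of $X$.
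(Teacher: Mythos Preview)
Your argument is correct, and its overall architecture matches the paper's: first show that only countably many isomorphism classes of complex K3 surfaces $Y$ can satisfy $T(Y)_\Q\cong T(X)_\Q$, then deduce that $X$ descends to $\Qbar$. The countability step is essentially the same in both proofs---you enumerate the $\Z$-lattices inside $T(X)_\Q$ and invoke Nikulin plus Torelli, while the paper observes that the discriminant of $T(Y)$ is determined up to a rational square and then appeals to the finiteness argument of Orr--Skorobogatov; these are two phrasings of the same phenomenon. (A small quibble: what you cite as ``Proposition~\ref{lattices}'' is in the paper the surjectivity of the period map, not the Torelli theorem you actually need here.)

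The genuine difference is in the descent step. You route through the coarse moduli space $\mathcal{M}_{2d}$ over $\Q$, show the moduli point has countable $\Aut(\C/\Qbar)$-orbit, conclude it is $\Qbar$-rational, and then pass to a fine level cover to descend the surface itself. The paper instead appeals directly to a criterion of Gonz\'alez-Diez valid for \emph{any} irreducible complex projective variety: if $\{{}^\sigma X:\sigma\in\Aut(\C/\Qbar)\}$ meets only countably many isomorphism classes, then $X$ is defined over $\Qbar$. This bypasses moduli entirely---no need for the arithmetic model of $\mathcal{M}_{2d}$, no level structures, no fine moduli space---and the paper remarks that the same trick reproves the analogous descent for abelian varieties without invoking Mumford's GIT construction. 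Your approach, on the other hand, is more in the spirit of how one usually thinks about fields of definition via moduli, and makes the role of the polarisation explicit; it costs a bit more input (existence of $\mathcal{M}_{2d}$ over $\Q$ and of a fine level cover) but is perfectly valid.
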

The above condition can be thought as an \emph{isogeny} relation between $X$ and its $\Aut(\C / \Qbar)$-conjugates. With this interpretation the theorem is analogous of the descent for abelian varieties established in \cite[Lemma 3.6]{MR3544295}: Let $A/\C$ be an abelian variety such that all its $\Aut(\C / \Qbar)$-conjugates are isogenous to $A$, then $A$ descends to a number field. Theorem \ref{main2} will follow from a general criterion proven by Gonz\'{a}lez-Diez (see Lemma \ref{descenttoqbar}) and the fact that there are only finitely many complex K3 surfaces with given transcendental lattice (see Lemma \ref{countable}). It is quite different from the proof of \cite[Lemma 3.6]{MR3544295} and can be used also to reprove such result (as explained in Remark \ref{isogenyandabvar}).

In the next remark we show that Theorem \ref{main2} implies that complex K3 surfaces with complex multiplication (or CM, for brevity) admit models over number fields. By complex multiplication we intend that the Mumford-Tate group associated to the Hodge structure $T(X)_\Q$ is commutative; for example every K3 surface of geometric Picard rank $20$ has such property (cf. \cite[Remark 3.10 (page 54)]{bookk3}). If $X/K$ is a CM K3 surface, it is a consequence of the Kuga-Satake construction and Deligne's work on absolute Hodge cycles (see for example \cite[Chapter II, Proposition 6.26 and Corollary 6.27]{MR654325}), that the $\ell$-adic monodromy associated to ${T(X_{\overline{K}})}_{\Q_\ell}$ is commutative as well. 
\begin{rmk}
One can check that complex K3 surfaces with complex multiplication satisfy the condition of the above theorem. Therefore they admit a model over a number field, reproving a result originally due to Pjatecki\u{\i}-\v{S}apiro and \v{S}afarevi\v{c}, see \cite[Theorem 4]{MR0335521}. For example, when $X/\C$ has maximal Picard rank, it is enough to notice that, thanks to the comparison between Betti and \'{e}tale cohomology, the quadratic forms associated to $T(X)$ and $T(\leftidx{^\sigma}{X})$ are in the same genus.
\end{rmk}

Finally we point out that recently C. Klevdal, in \cite{2018arXiv180702579K}, considered an analogue of Theorem \ref{mainthm} for K3 surfaces of Picard rank 1. Therefore only the case of geometric Picard rank two and twenty are left out of the picture. We hope to come back to this in the future. We refer the reader to \cite[Section 1.1]{2018arXiv180702579K} for a comparison between the two results and a precise statement of Klevdal's result.

\subsection{Comments on the assumptions of Theorem \ref{mainthm}}\label{comments}
As remarked above, the cohomology of K3 surfaces (defined over local fields) gives rise to de Rham representations. In particular condition (3) is satisfied if there exists a K3 surface $X_v/K_v$ of geometric Picard rank $\rho$ and ${\rho_{\ell_2}}_{| \Gal(\overline{K}_v/K_v)}$ is isomorphic to the representation induced by ${T_{\ell_2} (X_{\overline{K}_v} )}$\footnote{In condition iii) of Theorem \ref{3authors}, the Hodge--Tate weights are $-1,0$ since they want to relate ${\rho_{\ell_2}}_{| \Gal(\overline{K}_v/K_v)}$ to the Tate module of an abelian variety, which is the dual representation attached to the $H^1$. This explain the change of sings between the two theorems.}. So (1) and (3) are necessary conditions for the existence of the K3 surface the theorem aims to prove.

Condition (2), which appears also in Theorem \ref{3authors} as ii), is crucial to make the argument work (see Lemma \ref{lemma}) but it is not satisfied by every K3 surface (even if it holds for the generic K3 surface of Picard rank $\rho$). Klevdal considers the representations arising from the full $H^2$ of K3 surface of geometric Picard rank 1 and works with a different irreducibility condition (see condition (3) in \cite[Theorem1.1]{2018arXiv180702579K}). His conditions, which are satisfied by the cohomology of the generic K3 surface, imply that that the representation attached to the motive $M$ splits as a sum of the trivial representation and an absolutely irreducible one, and then works with the latter.

The absolutely irreducibility of condition (2) can not be weakened to require only the irreducibility of the $\ell$-adic Galois representations, as the beginning of section \ref{proof} shows. Notice also that the $\ell$-adic and Betti realisations of simple motives may be reducible and the irreducibility of the Hodge structure associated to a motive wont imply the irreducibility of the Galois representations attached to it. For example, if $X$ is a K3 surface defined over a number field $K$, $T(X)_\Q$, as Hodge structure, is always irreducible (see \cite[Chapter 3, Lemma 2.7 and Lemma 3.1]{bookk3}), but the Galois representations ${T_\ell(X_{\overline{K}})}$ may be reducible. This happens, for some $\ell$, whenever $X$ has complex multiplication.

The restriction on the Picard rank is due to the way we obtain a K3 surface from a polarised Hodge structure of K3 type (through Proposition \ref{lattices}). As we observed, the conditions in the theorem require the representations to have non-commutative image (unless they have values in $\Gl_1(\Q_\ell)$). Therefore our approach can not deal with case of Picard rank 20 (as Theorem \ref{3authors} excludes abelian varieties with complex multiplication).

\subsection{How to get rid of the finite field extension  \texorpdfstring{$L/K$}{L/K}}\label{fieldext} A positive answer to the following would allow to take $L=K$ in Theorem \ref{mainthm}.
\begin{question}\label{question}
Assume the conjectures as in Theorem \ref{mainthm}. Let $M$ be a simple motive defined over some number field $K$. Assume there exists a finite (Galois) extension $L/K$ and a K3 surface $Y/L$ such that $M_ L$ is isomorphic to the transcendental part of the motive of $Y_L$ (in the sense of Section \ref{decmotivsurface}). Is there a K3 surface $X$ defined over $K$ such that the transcendental part of the motive associated to $X$ is isomorphic to $M$.
\end{question}
In the case of abelian varieties (see the discussion before \cite[Proof of Lemma 3.6]{MR3544295}), the authors give an affirmative answer. The proof works by considering the Weil restriction to $K$ of the abelian variety $Y_L$ and, using Frobenius reciprocity, producing an endomorphism of it whose image corresponds to a $K$-abelian variety with the desired property. 

Unfortunately the argument does not apply to K3 surfaces and it is not clear if this field extension is necessary or not. Also in the situation considered by Klevdal a field extension is required, but, under an extra condition \cite[Condition (5) in Theorem 1.1]{2018arXiv180702579K} it is possible to give a bound on the degree of $L/K$. 
\subsection{Examples and applications}
We explain how to obtain examples of Galois representations to which Theorem \ref{mainthm} applies, without writing down a K3 surface, and how it produces a \emph{phantom isogeny class} of K3 surfaces, in analogy with the phantom isogeny class of abelian varieties defined by Mazur in \cite[page 38]{mazur}. 

Let $Y/K$ be a smooth projective variety defined over a number field $K$ such that, for some $i$, the Hodge decomposition induced on the primitive cohomology $H_{\text{prim}}^{2i}(Y(\C),\Q)$ \emph{looks like} the one of a K3 surface. More precisely its Hodge numbers are all zero but $h^{i,i}$ and $h^{p,q}=1$ for a unique, up to reordering, pair $(p,q)$ with $p+q=2i$ and the transcendental part of the $H^{i,i}$ has positive dimension less or equal than 18. Examples are provided by the $H^4$ of cubic fourfolds (where the Hodge numbers are $h^{0,4}=0, h^{1,3}=1, h^{2,2}=21$) and many varieties with $h^{2,0}=1$.

Consider the family of representations attached to the transcendental part of the $H_{\text{et}}^{2i}(Y_{\overline{K}},\Q_\ell (i-1))$, simply denoted by $T_\ell(Y_{\overline{K}})$, where we considered a Tate twist by $(i-1)$ to obtain the weight of an $H^2$:
\begin{displaymath}
\{\rho_{Y,\ell}: \Gal(\overline{K}/ K) \to \Gl (T_\ell(Y_{\overline{K}}))\}_{\ell}.
\end{displaymath}
The geometric origin of such representations implies that $\{\rho_{Y,\ell}\}_\ell$ is a weakly compatible system (cf. Remark \ref{deligne}) and that condition (1) of Theorem \ref{mainthm} is satisfied. The assumptions on the Hodge decomposition of the $H^{2i}(Y(\C),\Q)$ imply that condition (3) is satisfied. 

Assume now that $Y$ is such that also condition (2) is satisfied, i.e. for some prime $\ell_1$, $\rho_{\ell_1}$ is absolutely irreducible (such condition is satisfied by the generic cubic fourfold). Theorem \ref{mainthm}, after a finite extension $L/K$, associates a K3 surface $X/L$ to $Y$, with an isomorphism of $\Gal(\overline{L}/L)$-representations between $\rho_{Y,\ell}$ and $\rho_{X,\ell}$. Such K3 surface need not to be unique, and we think about K3s satisfying such condition as a \emph{phantom isogeny class}. The existence of such K3s could greatly simplify the study of Galois representations attached to such $Y$s. It would be interesting to construct them (over a number field!) without assuming any conjectures. For a survey explaining how K3 surfaces can help the study of the geometry of cubic fourfolds we refer the reader to \cite{MR3618665}. In particular in \cite[Section 3]{MR3618665} it is discussed how to associate K3 surfaces to \emph{special} cubic fourfolds via Hodge theoretical methods.

Theorem \ref{mainthm} has also the following amusing consequence, purely expressed in the $\ell$-adic language.
\begin{cor}
Assume the Tate, Fontaine--Mazur, and the Hodge conjectures. Let 
\begin{displaymath}
\{ \rho_\ell:\Gal(\overline{K}/ K) \to \Gl_{22-\rho}(\Q_\ell)\}_\ell,
\end{displaymath}
be a family of $\ell$-adic representations as considered in Theorem \ref{mainthm}. Then there exists a reductive group $G/\Q$ such that, after a finite extension $K'/K$, for every $\ell$, the image of $\Gal(\overline{K}/K')$ via $\rho_\ell$ has finite index in $G( \Q_\ell)$ and the index is bounded when $\ell$ varies.
\end{cor}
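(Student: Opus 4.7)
The plan is to reduce the corollary to Theorem \ref{mainthm} together with unconditional results on the $\ell$-adic monodromy of K3 surfaces. Applying Theorem \ref{mainthm} first produces a finite extension $L/K$ and a K3 surface $X/L$ of geometric Picard rank $\rho$ with $\rho_\ell|_{\Gal(\overline{L}/L)} \cong T_\ell(X_{\overline{L}})$ for every $\ell$. I would then take $G$ to be the Mumford--Tate group of the rational polarised Hodge structure $T(X)_\Q$: this is a connected reductive $\Q$-algebraic group, which by Zarhin's classification (using the hypothesis $2<22-\rho$) is essentially the special orthogonal group of the transcendental form, possibly twisted along an endomorphism field.

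Next, I would invoke the Mumford--Tate conjecture for K3 surfaces, known unconditionally by Tankeev and Andr\'e, to identify the neutral component of the Zariski closure of $\rho_\ell(\Gal(\overline{L}/L))$ inside $\Gl(T_\ell(X_{\overline{L}}))$ with $G\times_\Q \Q_\ell$ for every $\ell$. By a theorem of Serre, a single further finite extension $K'/L$, independent of $\ell$, kills the component groups simultaneously, so over $K'$ the Zariski closures themselves are connected and equal to $G_{\Q_\ell}$. The structure theory of compact $\ell$-adic Lie subgroups of $G(\Q_\ell)$, in particular the identification of the Lie algebra of the image with $\Lie(G)\otimes\Q_\ell$, then shows that the image is open, and hence of finite index, in $G(\Q_\ell)$ for each individual $\ell$.

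The main obstacle is the \emph{uniform} boundedness of the index as $\ell$ varies. To handle this I would pass to an integral picture: fix an integral model $\mathcal{G}$ of $G$ over $\Z[1/S]$ for a suitable finite set of primes $S$, together with the integral lattices in $T_\ell(X_{\overline{L}})$ provided by \'etale cohomology of a smooth model of $X$ over the $S$-integers of $L$. Then apply an adelic open image theorem for K3 surfaces, analogous to Serre's theorem for abelian varieties and established in the motivic generality we need by Cadoret--Moonen (and further refined by Cadoret--Kret), to conclude that the image of $\Gal(\overline{K}/K')$ in $\mathcal{G}(\widehat{\Z}[1/S])$ is open. Openness in the adelic topology is equivalent to openness at every place together with surjectivity onto $\mathcal{G}(\Z_\ell)$ for all but finitely many $\ell$, so only a finite set of primes can contribute to the index and the uniform bound on $[G(\Q_\ell):\rho_\ell(\Gal(\overline{K}/K'))]$ follows immediately.
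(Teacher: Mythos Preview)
Your proposal is correct and follows essentially the same route as the paper: apply Theorem \ref{mainthm} to realise the system on a K3 surface, take $G$ to be the Mumford--Tate group of $T(X)_\Q$, use Serre's result to pass to a finite extension over which all the $\ell$-adic monodromy groups are connected, and then invoke the integral Mumford--Tate conjecture for K3 surfaces established by Cadoret--Moonen to obtain the uniform bound on the index. The only noteworthy difference is that you appeal to the unconditional Mumford--Tate conjecture for K3 surfaces (Tankeev, Andr\'e) to identify the connected monodromy with $G_{\Q_\ell}$, whereas the paper simply deduces this from the already-assumed Tate and Hodge conjectures; your route is slightly cleaner since it avoids re-using the standing hypotheses, but the overall architecture of the argument is the same.
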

\begin{proof}
Theorem \ref{mainthm} shows that the weakly compatible system $\{ \rho_\ell\}_\ell$, up to replacing $K$ with a larger number field is associated to a K3 surface $X$. By a result of Serre there exists a finite extension $K'/K$ such that the $\ell$-adic monodromy of the ${\rho_\ell}_{| \Gal(\overline{K}/K')}$ is connected for every $\ell$ (see \cite[Section 1.1]{cadoretmoonen} for precise references). Let $G$ be the Mumford-Tate group of the K3 surface $X_\C$. The combination of the Tate and Hodge conjectures implies that, for every $\ell$, the image of ${\rho_\ell}_{| \Gal(\overline{K}/K')}$ is contained in $G(\Q_\ell)$ as a subgroup of finite index. Thanks to \cite[Theorem 6.6]{cadoretmoonen}, which is peculiar to K3 surfaces and abelian varieties, we have furthermore that the index is bounded independently from $\ell$.
\end{proof}
\begin{rmk}
In the proof of Theorem \ref{mainthm} it is first produced a motive $M$ whose $\ell$-adic realisations induce the family $\{\rho_\ell\}_\ell$. This weaker conclusion is not enough to obtain the corollary. Indeed the proof uses the \emph{Integral Mumford-Tate conjecture} which is known to follow from the classical Mumford-Tate conjecture, thanks to the work of Cadoret and Moonen, only for Galois representations attached to K3 surface and abelian varieties. More details about this can be found in \cite[Sections 1 and 2]{cadoretmoonen}. 
\end{rmk}

To conclude the introduction we point out that Theorem \ref{mainthm} can be interpreted in the setting of anabelian geometry and the section conjecture for the moduli space classifying primitively polarized K3 surfaces of degree $2d$. For more about this we refer the reader to the introduction of \cite{MR3544295} and the second part of Section 1.1 in \cite{2018arXiv180702579K}.

\subsection{Outline of paper}
In Section \ref{statement} we review the notion of weakly compatible systems, explain the formalism of motives we use and how to interpret the conjectures we assume. In Section \ref{proof} we prove the first main result (assuming Theorem \ref{main2}). The beginning of the proof closely follows the proof of \cite[Theorem 3.1]{MR3544295}, and we only recall the main steps. The last section, which is independent from the previous ones, proves Theorem \ref{main2}.

\subsection{Notations} By K3 surface $X/K$ we mean a complete smooth $K$-variety of dimension two such that $\Omega^2_{X/K}\cong \Oo_{X}$ and $H^1(X,\Oo_X)=0$. In this note $K$ will always denote a subfield of $\C$. For a complete overview of the theory of K3 surfaces we refer to the book \cite{bookk3}. We will make free use of the following standard notations:
\begin{itemize}
\item We denote by $(\Lambda_{\text{K3}},q)$ the K3-lattice, where $q$ is the quadratic form: it is the unique even unimodular lattice of signature $(3,19)$ (i.e. $E_8(-1)^2 \oplus U^3$);
\item We write \emph{PHS} as an acronym for \emph{rational} polarised Hodge structure, and $\Z$-PHS for \emph{integral} polarised Hodge structure (in particular we require that the underlying $\Z$-module is torsion free). Morphism in the category of PHS are maps of Hodge structure preserving the induced pairing;
\item By \emph{Hodge structure of K3 type} we mean an irreducible PHS of weight two such that $h^{2,0}=h^{0,2}=1$. In other references the irreducibility is not part of the definition, and they refer to \emph{irreducible PHS of K3 type};
\item Let $X$ be a complex K3 surface, we denote by $T(X)_\Q$ the transcendental part of the $H^2(X(\mathbb{C}),\Q)$, i.e. the orthogonal complement of $\NS(X)\otimes \Q \subset H^2(X(\mathbb{C}),\Q)$. It is a Hodge structure of K3 type (the irreducibility was first established in \cite{zarhinhodge});
\item Analogously, if $X$ is a K3 surface defined over a number field $K$, we define ${T_\ell(X_{\overline{K}})}$ to be the orthogonal complement of the image of $\NS(X_{\overline{K}})\otimes \Q_\ell$ in $H^2_{\text{et}}(X_{\overline{K}},\Q_\ell )$ with respect to the cup product in $\ell$-adic cohomology. 
\end{itemize}

\subsection{Acknowledgements}
It is a pleasure to thank Domenico Valloni for reading a draft of this paper and Alexei Skorobogatov for useful discussions regarding the theory of K3 surfaces. We are grateful to an anonymous referee whose precious comments improved the exposition and Proposition \ref{lattices}.

\subsection{Funding}
This work was supported by the Engineering and Physical Sciences Research Council [EP/ L015234/1], the EPSRC Centre for Doctoral Training in Geometry and Number Theory (The London School of Geometry and Number Theory), University College London.

\section{Weakly compatible systems and motives}\label{statement}
Let $K$ be a number field, $\overline{K}$ a fixed algebraic closure, $\Gal(\overline{K}/ K)$ its absolute Galois group. Given a rational prime $\ell$, we denote by $\Sigma_\ell$ the set of places of $K$ dividing $\ell$. From the beginning we also choose an embedding $\overline{K}\hookrightarrow \mathbb{C}$; this will be used whenever we compare the Betti cohomology to the $\ell$-adic one. If $v$ is a place of $K$ we write $K_v$ for the local field obtained completing $K$ at $v$ and $\Gal(\overline{K}_v/ K_v)$ for its absolute Galois group. 

\subsection{Systems of \texorpdfstring{$\ell$}{l}-adic Galois representations}
Let $\rho \in \N$ be such that $22-\rho > 0$. Consider a family of continuous $\ell$-adic Galois representation for every rational prime $\ell$
\begin{displaymath}
\{ \rho_\ell: \Gal(\overline{K}/ K) \to \Gl_{22-\rho}(\Q_\ell)\}_\ell.
\end{displaymath}
The definition of weakly compatible families presented is orginally due to Serre, who called them \emph{strictly compatible} on page I-11 of the book \cite{serrebook}.
 \begin{defi}[Weakly compatible]\label{weakcomp}
A family $\{\rho_\ell:\Gal(\overline{K}/ K)\to \Gl_{22-\rho}(\Q_\ell)\}_\ell$ is \emph{weakly compatible} if there exists a finite set of places $\Sigma$ of $K$ such that
\begin{itemize}
\item[(i)] For all $\ell$, $\rho_\ell$ is unramified outside the union of $\Sigma$ and the places of $K$ dividing $\ell$;
\item[(ii)] For all $v \notin \Sigma \cup \Sigma_\ell$, denoting with $\frv$ a Frobenius element at $v$, the characteristic polynomial of $\rho_\ell(\frv)$ has rational coefficients and it is independent of $\ell$.
\end{itemize}
\end{defi}
Recall that $\rho_\ell$ is said to be unramified at a place $v$ of $K$ if the image of the inertia at $v$ is trivial. If $\rho_\ell$ is attached to the $\ell$-adic cohomology of a smooth proper variety defined over a number field, the smooth and proper base change theorems, see for example \cite[I, Theorem 5.3.2 and Theorem 4.1.1]{MR463174}, imply that $\rho_\ell$ is unramified at every place $v\notin \Sigma_\ell$ such that $X$ has good reduction at $v$.
\begin{rmk}\label{deligne}
Deligne's work on the Weil conjectures (\cite[Theorem 1.6]{weil1}) the smooth and proper base change theorems imply that the $\ell$-adic representations attached to an $H_{\text{et}}^i(X_{\overline{K}},\Q_\ell(j))$ form a weakly compatible system, whenever $X$ is a smooth projective variety defined over a number field $K$.
\end{rmk}

\subsection{Tannakian and motivical formalism}\label{tannkaian} In this section we recall the motivical formalism and explain the conjectures appearing in the main theorem.

For any field $E$ of characteristic zero, we denote by $\mathcal{M}_{K,E}$ the category of pure homological motives over $K$ with coefficients in $E$. As recalled in \cite[Lemma 3.2]{MR3544295}, assuming the Tate conjecture, it is a semisimple category and it is equivalent to the category $\text{Rep} (\mathcal{G}_{K,E})$ for some pro-reductive group $\mathcal{G}_{K,E}$ (choosing an $E$-linear fibre functor).

We fix a family of embeddings $\iota_\ell : \overline{\Q}\to \overline{\Q}_\ell$ and write 
\begin{displaymath}
H_\ell : \mathcal{M}_{K,E}\to \text{Rep}_{\overline{\Q}_\ell}(\Gal(\overline{K}/ K))
\end{displaymath}
for the $\ell$-adic realisation functor associated to $\iota_\ell$.

What we need to know about the conjectures assumed in Theorem \ref{mainthm} can be found in \cite{MR3544295}, especially in Lemma 3.2, 3.3 and 3.4. A careful reader may notice that, in \cite[Theorem 3.1]{MR3544295} (stated as Theorem \ref{3authors} in the text), the semisimplicity conjecture it is also assumed. We can avoid this, since it has recently been proved that such conjecture follows from others. More precisely the following is \cite[Theorem 1]{moontate}:
\begin{thm}[Moonen]\label{moon}
The Tate conjecture implies the semisimplicity conjecture, i.e. if the functor $H_\ell$ is fully faithful then it takes value in the category of semisimple Galois representations.
\end{thm}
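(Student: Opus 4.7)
The plan is a purely Tannakian one: we transport the semisimplicity of the motivic category, which is already part of the running assumptions, through the fully faithful realisation functor to obtain the semisimplicity of Galois representations. After base change along the embedding $\iota_\ell$, view $H_\ell$ as a tensor exact functor between the neutral Tannakian categories $\mathcal{M}_{K,\overline{\Q}} \otimes_{\overline{\Q}} \overline{\Q}_\ell$ and $\Rep_{\overline{\Q}_\ell}(\Gal(\overline{K}/K))$ (the latter meaning continuous finite-dimensional representations). By hypothesis this functor is fully faithful. Tannakian duality translates this into a faithfully flat morphism $\pi\colon \mathcal{H} \twoheadrightarrow \mathcal{G}_{K,\overline{\Q}} \otimes_{\overline{\Q}} \overline{\Q}_\ell$, where $\mathcal{H}$ denotes the pro-algebraic envelope of $\Gal(\overline{K}/K)$, i.e.\ the Tannakian fundamental group of the category of continuous $\overline{\Q}_\ell$-representations.

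The key consequence of full faithfulness is that the essential image of $H_\ell$ is a Tannakian subcategory of $\Rep_{\overline{\Q}_\ell}(\Gal(\overline{K}/K))$ closed under sub-objects. Concretely, fix $M\in \mathcal{M}_{K,\overline{\Q}}$ and set $V:=H_\ell(M)$. The kernel $N:=\ker(\pi)$ acts trivially on $V$, because $V$ lies in the essential image of $H_\ell$. Consequently $N$ acts trivially on every subspace $W\subseteq V$; so any $\Gal(\overline{K}/K)$-stable $W$ automatically descends to a representation of the quotient $\mathcal{G}_{K,\overline{\Q}}\otimes \overline{\Q}_\ell$, i.e.\ it lies in the essential image of $H_\ell$. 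By fully faithfulness this means $W = H_\ell(M')$ for some sub-motive $M'\hookrightarrow M$.

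Once this is established we conclude immediately: since $\mathcal{M}_{K,\overline{\Q}}$ is semisimple (this being part of what the Tate conjecture delivers, as recalled in Section~\ref{tannkaian}), the inclusion $M'\hookrightarrow M$ admits a motivic splitting $M=M'\oplus M''$. Applying $H_\ell$ produces a $\Gal(\overline{K}/K)$-equivariant decomposition $V = W \oplus H_\ell(M'')$. Thus every sub-representation of $V$ has a Galois-stable complement, and $V$ is semisimple.

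The genuinely delicate point, and what I expect to be the main obstacle, is the rigorous identification of the target of $H_\ell$ with the category of rational representations of a bona fide pro-algebraic group $\mathcal{H}$, so that the formal Tannakian dictionary between full faithfulness of $H_\ell$ and the existence of a faithfully flat quotient $\pi$ can actually be invoked. One needs to be careful comparing the category of continuous finite-dimensional $\overline{\Q}_\ell$-representations of $\Gal(\overline{K}/K)$ with the category of algebraic representations of its pro-algebraic hull, and to verify the compatibility of the argument with the change of coefficient field from $\overline{\Q}$ to $\overline{\Q}_\ell$; this technical verification is where the real content of Moonen's note lies.
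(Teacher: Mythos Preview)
First, note that the paper does not actually prove this theorem; it merely quotes it as \cite[Theorem 1]{moontate} and moves on. So there is no in-paper argument to compare against, and your attempt has to stand on its own.

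Your argument has a genuine gap at the Tannakian step. You assert that full faithfulness of $H_\ell$ translates, via Tannakian duality, into a faithfully flat homomorphism $\pi\colon \mathcal{H}\twoheadrightarrow \mathcal{G}_{K,\overline{\Q}}\otimes\overline{\Q}_\ell$, and then use surjectivity of $\pi$ to deduce that the essential image of $H_\ell$ is closed under subobjects. But the dictionary runs the other way: in Deligne--Milne (Proposition~2.21), $\pi$ is faithfully flat if and only if the corresponding functor is fully faithful \emph{and} its essential image is closed under subobjects. Full faithfulness alone does not give the second condition. Concretely, a Galois module $V$ with $\End_{\Gal}(V)=\overline{\Q}_\ell$ need not be irreducible: any non-split extension of two non-isomorphic irreducibles already has scalar endomorphism ring. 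So knowing $\End_{\Gal}(H_\ell(M))\cong\End(M)\otimes\overline{\Q}_\ell$ for every $M$ does not prevent $H_\ell(M)$ from having Galois-stable subspaces that do not arise from sub-motives, and your splitting argument never gets off the ground.

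There is also a circularity lurking in your appeal to the semisimplicity of $\mathcal{M}_{K,\overline{\Q}}$. Via Jannsen's theorem, semisimplicity of homological motives is equivalent to the agreement of homological and numerical equivalence, and the standard route from the surjectivity form of the Tate conjecture to that agreement passes through the semisimplicity of the $\ell$-adic realisations---exactly what you are trying to establish. Moonen's proof avoids this trap: rather than transporting semisimplicity formally through a fully faithful functor, he works directly with the Zariski closure $G_\ell$ of the image of Galois and uses the Tate conjecture for $X$ and its powers, together with the non-degeneracy of the intersection pairing on cycles modulo numerical equivalence, to show that invariants and coinvariants coincide throughout the Tannakian category generated by $H_\ell(M)$. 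This forces the unipotent radical of $G_\ell$ to be trivial. That comparison of invariants and coinvariants is the substantive input you are missing; the difficulty is not, as your final paragraph suggests, a merely technical issue about continuous versus algebraic representations.
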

In what follows, we will therefore freely use the fact that the $\ell$-adic realisations of a motive are semisimple as Galois modules.

The main point is that the Fontaine--Mazur conjecture  (\cite[Conjecture 1]{MR1363495}), together with the Tate conjecture (\cite[Conjecture $T^j(X)$ for every $j$ and every $X$ (page 72)]{MR1265523}), describes the image of the $\ell$-adic realisation functors, as the following lemma explains (the Hodge conjecture will be used in the proof to compare the $\ell$-adic realisation functor with the Betti one).
\begin{lemma}[Lemma 3.3 of \cite{MR3544295}]\label{lemma}
Assume the Tate and the Fontaine--Mazur conjecture. Let $r_\ell: \Gal(\overline{K}/K) \to \Gl_N(\Q_\ell)$ be an irreducible geometric Galois representation. Then there exists an object $M \in\mathcal{M}_{K,\Qbar}$ such that
\begin{displaymath}
r_\ell \otimes \overline{\Q}_\ell \cong H_\ell(M) \in \operatorname{Rep}_{\overline{\Q}_\ell}(\Gal(\overline{K}/K)).
\end{displaymath}
\end{lemma}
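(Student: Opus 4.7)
The idea is to combine the Fontaine--Mazur conjecture (which produces cohomological origins for geometric Galois representations) with Moonen's Theorem \ref{moon} (which upgrades the Tate conjecture to semisimplicity of $H_\ell$), so that ``subquotient of a cohomology'' is automatically promoted to ``direct summand of a motive''. The one genuine subtlety is a coefficient book-keeping argument, since $r_\ell$ lives over $\Q_\ell$ but motives in $\mathcal{M}_{K,\Qbar}$ realize to representations over $\Qlbar$.

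First, extend scalars via the chosen embedding $\iota_\ell$: $r_\ell \otimes_{\Q_\ell} \Qlbar$ remains geometric (unramified outside a finite set, de Rham at all places above $\ell$), and, being the scalar extension of an irreducible representation in characteristic zero, is semisimple. Decompose it as $V_1 \oplus \cdots \oplus V_k$ into irreducible $\Qlbar$-representations; each $V_j$ is itself geometric. The Fontaine--Mazur conjecture \cite[Conjecture 1]{MR1363495} then produces, for each $j$, a smooth projective variety $X_j/K$ and integers $i_j,n_j$ such that $V_j$ is a subquotient of $H^{i_j}_{\text{et}}(X_{j,\overline{K}},\Qlbar)(n_j)$.

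Second, by construction of $H_\ell$, this cohomology is the $\ell$-adic realization of the motive $N_j := h^{i_j}(X_j)(n_j)\otimes_{\Q}\Qbar \in \mathcal{M}_{K,\Qbar}$. Under the Tate conjecture the category $\mathcal{M}_{K,\Qbar}$ is semisimple and, by Theorem \ref{moon}, $H_\ell$ takes values in semisimple representations; hence $H_\ell(N_j)$ is itself a semisimple $\Qlbar$-representation, so the subquotient $V_j$ is actually a direct summand. Moreover $H_\ell$ is fully faithful on the semisimple Tannakian category $\mathcal{M}_{K,\Qbar}$, so the decomposition of $H_\ell(N_j)$ into irreducibles lifts to a decomposition of $N_j$ into simples; let $M_j \in \mathcal{M}_{K,\Qbar}$ be the simple summand with $H_\ell(M_j)\cong V_j$.

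Setting $M := \bigoplus_{j=1}^{k} M_j \in \mathcal{M}_{K,\Qbar}$ we obtain
\begin{displaymath}
H_\ell(M) \;\cong\; \bigoplus_{j=1}^{k} V_j \;\cong\; r_\ell \otimes_{\Q_\ell}\Qlbar,
\end{displaymath}
as required. The main obstacle in the argument is conceptual rather than technical: one must check that geometricity and de Rham-ness are preserved under the coefficient extension $\Q_\ell \hookrightarrow \Qlbar$ (standard, as these are conditions on the underlying topological representation), and then use semisimplicity, provided by Moonen's theorem, to turn the Fontaine--Mazur ``subquotient'' into a direct summand that can be lifted back to the motivic category.
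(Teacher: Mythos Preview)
The paper does not supply its own proof of this lemma; it is quoted as \cite[Lemma 3.3]{MR3544295} and used as a black box. Your argument is the natural one and is essentially what that reference does: Fontaine--Mazur furnishes a cohomological origin as a subquotient, and the Tate conjecture (together with Moonen's Theorem \ref{moon}) makes $H_\ell$ fully faithful with semisimple image, so that subquotients become direct summands which can be lifted back to $\mathcal{M}_{K,\Qbar}$.

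One small remark: when you write $N_j = h^{i_j}(X_j)(n_j)\otimes_{\Q}\Qbar$ you are implicitly using the algebraicity of the K\"unneth projectors, which is not known unconditionally. This is harmless here, since the K\"unneth components are Galois-invariant classes on $X_j\times X_j$ and hence algebraic under the Tate conjecture; alternatively, you can simply take $N_j = h(X_j)(n_j)\otimes_{\Q}\Qbar$ and let the semisimplicity of $H_\ell(N_j)$ absorb the extra pieces. Either way the argument goes through.
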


\subsection{The motive of a surface}\label{decmotivsurface}
Let $M=h(X)$ be the motive of a (smooth projective connected) surface. The class of the diagonal $\Lambda \in \text{Corr}^0(X \times X)$ can be written as 
\begin{displaymath}
[\Lambda] = \sum_i \pi^i \in H^4(X\times X).
\end{displaymath}
Since the decomposition is algebraic, in the sense that each $\pi^i$ can be seen as the class of some orthogonal projector $\pi^i $ in $\text{Corr}^0(X \times X)$, we can decompose the motive $M$ as follows (Chow-K\"{u}nneth decomposition):
\begin{displaymath}
M= \mathbbm{1} \oplus h^1(X)\oplus h^2(X)\oplus h^3(X)\oplus \mathbb{L}^2,
\end{displaymath}
where $\mathbbm{1}$ is the motive of a point, $\mathbb{L}$ is the Lefschetz motive (defined by the equation $h(\PP^1_k)=\mathbbm{1} \oplus \mathbb{L}$), and $h^i(X)=(X,\pi^i,0)$.

Moreover, in \cite[Prop 2.3]{kahnmurrepedrini}, it is explained that there exists a unique splitting 
\begin{displaymath}
\pi^2= \pi^2_{alg}+\pi^2_{tr}
\end{displaymath}
inducing a refined Chow-K\"{u}nneth decomposition for the motive $M$:
\begin{displaymath}
h^2(X)= \left( h^2_{alg}(X)\oplus t^2(X)\right)
\end{displaymath}
where $h^2_{alg}(X)=(X, \pi^2_{alg},0)$ and $ t^2(X)=(X,\pi^2_{tr},0)$.

In this note we will be interested in the case of a K3 surface, so, from now on we will consider just the weight-two part of the motive of $X$. In particular the Betti realisation satisfies the following relation:
\begin{displaymath}
H_B(h^2_{alg}(X)\oplus t^2(X))=\NS(X)_\Q \oplus T(X)_\Q.
\end{displaymath}

\section{Proof of Theorem \ref{mainthm}}\label{proof}
The proof of the Theorem begins like the argument in \cite{MR3544295}, so we only recall the main steps. Thanks to the Tate, Fontaine--Mazur (and semisimplicity) conjectures the essential image of the $\ell$-adic realisation functor from the category of motives over $K$ with coefficients in $\overline{\Q}$ can be described explicitly, as in Lemma \ref{lemma}. In particular, choosing a place $\ell_0$ as in (1), there exists a representation of the group $\mathcal{G}_{K,E}$ (as explained in section \ref{tannkaian})
\begin{displaymath}
\rho : \mathcal{G}_{K,E}\to \Gl_{22-\rho,E}
\end{displaymath}
for some number field $E$, such that $H_{\ell_0}(\rho)\cong \rho_{\ell_0}\otimes \overline{\Q}_{\ell_0}$. Since the family $\{\rho_\ell\}_\ell$ is weakly compatible, and we assumed $\rho_{\ell_1}$ to be absolutely irreducible, it follows that $\rho$ induces every $\rho_\ell$. This allows to read the assumptions imposed at some prime $l_i$ in every $\rho_\ell$. Finally, since every $\rho_\ell$ is a representation with $\Q_\ell$ coefficients (rather than with coefficients in the completions of $E$), hence \cite[Lemma 3.4]{MR3544295} guarantees that the representation $\rho$ can be defined over $\Q$.

To summarize, we know that the compatible family $\{\rho_\ell\}_\ell$ arises from a representation
\begin{displaymath}
\rho: \mathcal{G}_K\to \Gl _{22-\rho, \Q},
\end{displaymath}
or, in equivalent terms, from a motive $M\in \mathcal{M}_K$ of rank $22-\rho$. By construction $M$ is also absolutely simple and $\End(\rho)=\Q$.

By hypothesis there exists a prime $\ell_2$ and a place $v$ dividing $\ell_2$ such that ${\rho_{\ell_2}}_{ | \Gal(\overline{K}_v/ K_v)}$ is de Rham with Hodge--Tate numbers equal to those of the transcendental lattice of a K3 surface of rank $\rho$.  Denote by $H_{dR}: \mathcal{M}_K \to \text{Fil}_K$ the de Rham realisation functor into the category of filtered $K$-vector spaces. From the comparison theorem between de Rham and \'{e}tale cohomology\footnote{Such comparison was conjectured by Fontaine in \cite[Conjecture A.6]{MR657238} and proved by Faltings in \cite{MR1463696}.} we have
\begin{displaymath}
H_{dR}(M)\otimes_K \mathbb{B}_{dR, K_v}\cong H_{\ell_2}(M) \otimes_{\Q_{\ell_2}} \mathbb{B}_{dR, K_v}
\end{displaymath}
where $\mathbb{B}_{dR, K_v}$ is the de Rham period ring over $K_v$. The fact that the above isomorphism is compatible with the filtration and the Galois action, the definition of $D_{dR,K_v}$ and the fact that $\mathbb{B}_{dR}^{\Gal(\overline{K}_v/K_v)}=K_v$, imply that
\begin{equation}	\label{weights}
H_{dR}(M)\otimes_K K_v \cong D_{dR,K_v}(H_{\ell_2}(M)).
\end{equation}
We write $M_{| \C}$ for the base change of $M\in \mathcal{M}_K$ in the category $\mathcal{M}_{K,\C}$ (we fixed from the beginning an embedding of $K$ into the complex numbers). Recall the Betti-de Rham comparison isomorphism:
\begin{displaymath}
H_{dR}(M)\otimes _K \mathbb{C}\cong H_B(M_{| \C}) \otimes _\Z \mathbb{C}.
\end{displaymath}
By \ref{weights}, $H_B(M_{| \C})$ is a polarizable rational Hodge structure of weight two and with Hodge numbers $1-(20-\rho)-1$. 

\begin{rmk}
While the previous part of the proof works in general (and closely follows the beginning of the proof of \cite[Theorem 3.1]{MR3544295}), from now on we will use in a substantial way the condition on the Hodge--Tate weights to produce a K3 surface. Our aim is to use $H_B(M_{| \C})$ to produce a period and then a K3 surface, invoking the surjectivity of the period map of Todorov. To apply this strategy we need the conjunction of the Tate and the Hodge conjecture, so that we can deduce properties of the Hodge structure from the properties imposed on the family $\{\rho_\ell\}_\ell$ (especially condition (3)). To do so, the Hodge conjecture will be used from now on.
\end{rmk}

Since $M$ is absolutely simple, $H_B(M_{| \C})$ is an irreducible Hodge structure. Fixing a polarization $\psi$ on $H_B(M_{| \C})$, the pair $(H_B(M_{| \C}), \psi)$ becomes $\Q$-PHS of K3 type. Moreover, since $\End (M)=\Q$, we have have that the endomorphism field of $(H_B(M_{| \C}), \psi)$ is $\Q$ as well (here again the fact that the representation is absolutely irreducible and the Hodge conjecture are fundamental).

Invoking the surjectivity of the period map, we want to produce a complex K3 surface from the rational polarised Hodge structure associated to $M_{|\C}$. We argue as follows.
\begin{prop}\label{lattices}
Let $(V,h,\psi)$ be a $\Q$-PHS of K3 type of dimension $22-\rho $. If $2\leq 22-\rho \leq 19$, then there exists a complex K3 surface $X$ with $T(X)_\Q$ isomorphic to $(V,h,\psi)$ as rational Hodge structures. 
\end{prop}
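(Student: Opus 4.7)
The strategy is to realize $(V, h, \psi)$ as the rational transcendental lattice of the $H^2$ of some complex K3 surface, invoking the surjectivity of the period map.

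First, choose an even $\Z$-lattice $L \subset V$ with $L \otimes \Q = V$ and with $\psi|_L$ integer-valued and satisfying $\psi(x,x) \in 2\Z$ for all $x \in L$; this is possible by rescaling a basis of $V$ and passing to a sufficiently deep sublattice. The key step is then to construct a primitive embedding $L \hookrightarrow \Lambda_{\text{K3}}$ into the K3-lattice. The hypothesis $22-\rho \leq 19$ is used precisely here: the signature $(2, 20-\rho)$ of $(V,\psi)$ fits strictly inside the signature $(3,19)$ of $\Lambda_{\text{K3}}$. By Nikulin's theorem on primitive embeddings of even lattices into even unimodular lattices, after possibly modifying $L$ within its rational isomorphism class so as to control the length of the discriminant group, such a primitive embedding exists.

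Let $N := L^{\perp}_{\Lambda_{\text{K3}}}$, an even lattice of signature $(1, \rho-1)$. Extend the Hodge decomposition of $V$ to a Hodge structure of K3 type on $\Lambda_{\text{K3}} \otimes \C$ by declaring $N \otimes \C$ to lie entirely in the $(1,1)$-part. The Riemann bilinear relations follow from those on $V$, and the resulting Hodge numbers $(1,20-\rho+\rho, 1) = (1,20,1)$ match those of the $H^2$ of a K3 surface. Applying Todorov's theorem on the surjectivity of the period map, we obtain a complex K3 surface $X$ together with a marking $H^2(X, \Z) \cong \Lambda_{\text{K3}}$ realizing this Hodge structure.

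It remains to identify $T(X)_\Q$ with $V$. Since $N \otimes \C$ is purely of type $(1,1)$, we have $N \otimes \Q \subseteq \NS(X)_\Q$, which gives $T(X)_\Q \subseteq N^{\perp}_\Q = V$. Conversely, $V$ is irreducible as a $\Q$-Hodge structure (by the paper's definition of K3 type) with $h^{2,0}=1$, hence $V \cap \NS(X)_\Q = V \cap H^{1,1}_\Q = 0$; since $V$ is a Hodge substructure of $H^2(X,\Q) = \NS(X)_\Q \oplus T(X)_\Q$, this forces $V \subseteq T(X)_\Q$. The polarizations agree by construction, yielding the desired isomorphism of rational polarised Hodge structures.

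The main obstacle is the lattice-embedding step: showing that for every $\Q$-PHS of the prescribed shape some integral realization $L$ can be chosen so that Nikulin's criterion for primitive embedding into $\Lambda_{\text{K3}}$ is satisfied across the whole range $22-\rho \leq 19$. The upper bound is what provides enough flexibility; at $22-\rho = 20$ (Picard rank two) the signature constraints become too tight for this approach.
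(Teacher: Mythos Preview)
Your overall architecture matches the paper's: embed $(V,\psi)$ as a primitive piece of the K3 lattice, transport the Hodge structure, and invoke surjectivity of the period map. The difference lies in \emph{how} the primitive embedding is obtained.

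The paper does not pick an integral lattice first and then appeal to Nikulin. Instead it uses a classical result on rational quadratic forms (O'Meara, Jones): whenever the ``defect'' $\rho=\dim\Lambda_{\text{K3}}-\dim V$ is at least $3$, the form $(V,\psi)$ is \emph{rationally represented} by $(\Lambda_{\text{K3}},q)$, i.e.\ there is an isometric embedding $V\hookrightarrow\Lambda_{\text{K3}}\otimes\Q$. One then sets $T:=V\cap\Lambda_{\text{K3}}$; this is automatically a primitive sublattice of $\Lambda_{\text{K3}}$ with $T\otimes\Q=V$, and the argument concludes. No control of discriminant groups is needed.

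Your route through Nikulin is not wrong in spirit, but the step you flag as ``the main obstacle'' is a genuine gap as written. Nikulin's criterion requires $\ell(A_L)\le\rho$ (or $\le\rho-2$ for the clean version), and for small $\rho$ (say $\rho=3,4,\dots,10$) it is not at all obvious that an even lattice $L$ with $L\otimes\Q\cong(V,\psi)$ and such a short discriminant group exists. The phrase ``after possibly modifying $L$ within its rational isomorphism class'' hides exactly the content that needs proof; and the natural way to supply it is precisely the rational representation theorem the paper invokes---at which point Nikulin becomes superfluous, since $V\cap\Lambda_{\text{K3}}$ already sits primitively.

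A smaller point: your explanation of why $22-\rho\le19$ is needed is off. The signature $(2,20-\rho)$ fits strictly inside $(3,19)$ already for $\rho=2$, so signature alone does not explain the cutoff. The actual obstruction at $\rho\in\{1,2\}$ is arithmetic: when the defect drops below $3$, rational representability of $(V,\psi)$ by $\Lambda_{\text{K3}}$ is no longer automatic---constraints on the square class of the determinant and the Hasse invariant appear (this is exactly the remark following the proposition in the paper).

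Your final identification $T(X)_\Q=V$ is fine and slightly more detailed than the paper's, which simply quotes the form of the surjectivity statement that returns a K3 with prescribed transcendental lattice.
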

\begin{rmk}
Such proposition requires $\rho$ to be different from 1 and 2, where some restriction on the square class of the determinant of $(V,\psi)$ and its Hasse invariant appears (see \cite[Section IV]{omeara}). Even if the above proposition applies, the case $\rho = 20$ has to be excluded from the theorem, as remarked in section \ref{comments}. Indeed if $X$ has Picard rank 20 then the endomorphism field of $T(X)_\Q$ has to be larger than $\Q$ (see \cite[Remark 3.10 (page 54)]{bookk3} for an elementary proof of this fact). 
\end{rmk}
\begin{proof}[Proof of Proposition \ref{lattices}]
Notice that the quadratic form $(V,\psi)$ is rationally represented by the K3-lattice $(\Lambda_{\text{K3}},q)$. Indeed, as explained in \cite[Section IV]{omeara} (see also \cite[Theorems 17 and 31]{MR0037321}), this is true whenever
\begin{displaymath}
\text{defect}:=\dim \Lambda_{\text{K3}} - \dim V=\rho  \geq 3.
\end{displaymath}
We can therefore interpret $V$ as a subspace of $\Lambda_{\text{K3}}\otimes \Q$, and let $T$ be the intersection of $V$ with $\Lambda_{\text{K3}}$ (seen in $\Lambda_{\text{K3}}\otimes \Q$). By definition $T$ is a primitive sub-lattice of $\Lambda_{\text{K3}}$. Since the Hodge structure $h$ on $V$ is of K3 type, the quadratic form on $V$ (and thus on $T$) has signature $(2,19-\rho)$. Transporting the Hodge structure $h$ from $V$ to $T$ we obtain an irreducible integral Hodge structure with the right signature. Finally we can apply the surjectivity of the period map (see for example \cite[Theorem 6.3.1 and Remark 6.3.3 (page 114)]{bookk3}), to obtain a complex (algebraic) K3 surface $X$ such that $T(X)\cong T$.
\end{proof}

Let $X$ be the complex K3 surface obtained as in the proposition from $(H_B(M_{| \C}), \psi)$. Thanks to the Hodge conjecture we can lift the isomorphism of Hodge structures
\begin{displaymath}
T(X)_\Q \cong H_B(M_{|\C})
\end{displaymath}
to get an isomorphism at the level of motives. We indeed have
\begin{displaymath}
t^2(X)\cong M_{| \C} \in \mathcal{M}_\C,
\end{displaymath}
where $t^2(X)$ is the transcendental part of the motive of $X$, introduced in Section \ref{decmotivsurface}. 

To complete the proof we need a model $Y_L$ of $X$ defined over a finite extension $L$ of $K$, such that
\begin{displaymath}
t^2(Y_L)\cong M_{| L} \in \mathcal{M}_L.
\end{displaymath}
Since $M$ is defined over a number field, for all $\sigma \in \Aut(\C / \Qbar)$, we have the following chain of isomorphisms:
\begin{equation}\label{conj}
\leftidx{^\sigma}{t^2(X)}\cong \leftidx{^\sigma}{M_{| \C}}= M_{| \C} \cong t^2(X)  \in \mathcal{M}_\C.
\end{equation}
Notice that $\leftidx{^\sigma}{t^2(X)}=( \leftidx{^\sigma}{X}, \leftidx{^\sigma}{\pi}{^2_{alg}},0)= t^2(\leftidx{^\sigma}{X})$ from the uniqueness of the splitting $\pi^2= \pi^2_{alg}+\pi^2_{tr}$ in $X\times X$.

In particular we have
\begin{displaymath}
H_B(t^2(X))= T(X)_\Q  \text{   and   } H_B(\leftidx{^\sigma}{t^2(X)})= T(\leftidx{^\sigma}{X})_\Q.
\end{displaymath}

Taking the Betti realisation (with $\Q$-coefficients, as usual) of the equation \ref{conj}, we observe that $T(X)_\Q \cong T(\leftidx{^\sigma}{X})_\Q$ for all $\sigma\in  \Aut(\C / \Qbar)$. Applying Theorem \ref{main2}, that will be proved in the next section, this condition is enough to obtain a model $Y_L$ of $X/\C$ defined over some number field $L/K$ where $t^2(Y_L)\cong M_{| L}\in \mathcal{M}_L$.

Theorem \ref{mainthm} is finally proven: $Y_L$ is the K3 surface, defined over a finite extension $L$ of $K$, we were looking for. As remarked in the introduction we actually proved something more: there exists a simple motive $M$ defined over $K$ inducing the representations $\rho_\ell$, and a finite extension $L/K$ such that the base change of $M$ to $L$, denoted by $M_ L$, is isomorphic to the transcendental part of the motive of a K3 surface defined over $L$.

\section{Descent to a number field}\label{descentlemmas}
In this last section, we prove Theorem \ref{main2}. The result will follow from the combination of the following:
\begin{itemize}
\item The number of complex K3 surfaces, up to isomorphism, $Y$ such that $T(Y)_\Q$ is isomorphic to $T(X)_\Q$ is at most countable, cf. Lemma \ref{countable};
\item If all the conjugates of $X$, with respect to $\Aut(\mathbb{C} /  \Qbar)$, fall into countably many isomorphism classes, then $X$ descends to a number field, cf. Lemma \ref{descenttoqbar}.
\end{itemize}
The first point resembles the fact the the isogeny class of a given complex abelian variety consists of a countable set of (complex) abelian varieties (up to an isomorphism).

\begin{rmk} In the integral case we have the following. Let K3 be the full subcategory of complex varieties whose objects are K3 surfaces, and $X$ be a K3 surface. The set 
\begin{displaymath}
FM(X):=\{Y \in K3 \text{ such that there exists a Hodge isometry } T(Y) \cong T(X)\}
\end{displaymath}
contains only finitely many isomorphism classes. The proof of this result is due to Mukai, see \cite{mukai}, and builds on the derived Torelli theorem and the finiteness of the Fourier-Mukai partners. See also \cite[Proposition 16.3.10, Corollary 16.3.7 and Corollary 16.3.8]{bookk3} and \cite[Proposition 4.4]{martinskoro} for a direct argument which we emulate in the next lemma.
\end{rmk}

\begin{lemma}\label{countable}
Let $X/\C$ be a K3 surface. The set
\begin{displaymath}
S:=\{Y \in K3 \text{ such that } T(Y)_\Q\cong T(X)_\Q \text{ as }\Q-PHS\}/\text{isomorphism},
\end{displaymath}
is either finite or countable.
\end{lemma}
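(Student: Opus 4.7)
The plan is to associate to each isomorphism class $[Y]\in S$ a piece of data lying in a countable set, in such a way that the fibres of this assignment are themselves at most countable. Fix once and for all a representative $(V_\Q,h,\psi)$ of the common $\Q$-PHS class $T(X)_\Q$. For every $Y$ with $[Y]\in S$ I would choose an isomorphism $\phi_Y\colon T(Y)_\Q \xrightarrow{\sim} V_\Q$ of rational polarised Hodge structures (non-canonical, but any choice suffices) and set $\Lambda_Y := \phi_Y(T(Y)) \subset V_\Q$. This is a $\Z$-lattice of full rank in the finite-dimensional $\Q$-vector space $V_\Q\cong \Q^{22-\rho}$, and any such lattice is spanned by finitely many rational vectors; since the set of finite subsets of $\Q^{22-\rho}$ is countable, the image of the assignment $[Y]\mapsto \Lambda_Y$ lies in a countable set.

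Next I would bound the fibres using Torelli. Suppose $\Lambda_{Y_1}=\Lambda_{Y_2}=\Lambda$. Pulling back the Hodge structure of $V_\Q$ via $\phi_{Y_i}^{-1}$ shows that $T(Y_1)\cong T(Y_2)$ as integral polarised Hodge structures (both being isomorphic to $\Lambda$ equipped with the integral PHS restricted from $V_\Q$). The strong Torelli theorem asserts that a complex K3 surface is determined up to isomorphism by the Hodge-isometry class of $H^2(Y,\Z)$. Since $H^2(Y,\Z)\cong \Lambda_{\text{K3}}$ as abstract lattices and $\NS(Y)$ is the orthogonal complement of $T(Y)$ inside $\Lambda_{\text{K3}}$ with trivial Hodge structure, $Y$ is pinned down by the $O(\Lambda_{\text{K3}})$-orbit of a primitive embedding of the integral Hodge structure on $\Lambda$ into $\Lambda_{\text{K3}}$. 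The set of primitive embeddings of a fixed lattice into the countable lattice $\Lambda_{\text{K3}}$ is a priori countable (in fact finite modulo $O(\Lambda_{\text{K3}})$, by Nikulin's theory of embeddings of even lattices into even unimodular ones), so each fibre is at most countable.

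Combining the two steps, $S$ is a countable union of at-most-countable sets, hence itself at most countable. The step I expect to require the most care is the fibre bound: one must verify that the Torelli statement really applies in the form used — namely that the isomorphism class of $Y$ is captured by the integral PHS on $H^2(Y,\Z)$, which on the transcendental part reduces, after primitive embedding into $\Lambda_{\text{K3}}$, to the data I have described. The countability of lattices in a finite-dimensional $\Q$-vector space and the countability of primitive embeddings into $\Lambda_{\text{K3}}$ are then straightforward once the framework is in place; Nikulin's finiteness, while stronger than strictly needed, is the cleanest input. The choices of $\phi_Y$ play only an auxiliary role and could equivalently be encoded by passing to $O(V_\Q,h,\psi)$-orbits of lattices.
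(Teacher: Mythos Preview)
Your argument is correct and follows a route close in spirit to, but more explicit than, the paper's. Both proofs factor $S$ through a countable invariant and then bound the fibres using lattice/Torelli input, but they choose different invariants. The paper observes that the discriminant of $T(Y)$ lies in a fixed class in $\Q^*/(\Q^*)^2$ (since $T(Y)_\Q\cong T(X)_\Q$ as quadratic spaces), hence takes countably many integer values; it then cites the argument of Orr--Skorobogatov to conclude that fixing rank and discriminant leaves only finitely many isomorphism classes. You instead transport $T(Y)$ into the fixed rational space $V_\Q$ via the chosen $\phi_Y$ and use the lattice $\Lambda_Y\subset V_\Q$ itself as the countable invariant, then invoke Torelli together with Nikulin's finiteness of primitive embeddings to control the fibre.

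Your version has the advantage of being self-contained: the countability of full lattices in $\Q^{22-\rho}$ is immediate, and once $\Lambda_Y$ is fixed the integral PHS on $T(Y)$ is visibly determined, so the fibre analysis reduces cleanly to the classical Fourier--Mukai-partner finiteness (which the paper in fact recalls in the remark preceding the lemma). The paper's version is terser but leans on an external reference for the reduction step. One small point worth tightening in your write-up: the non-canonical choice of $\phi_Y$ means the assignment $[Y]\mapsto\Lambda_Y$ is not a well-defined \emph{map} on $S$, only a choice of section; this is harmless for the countability conclusion, but it would be cleaner to phrase it as ``choose a set-theoretic section $S\to\{\text{lattices in }V_\Q\}$'' or, as you note at the end, to pass to $O(V_\Q,h,\psi)$-orbits.
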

\begin{proof}
Let $Y\in S$, by reasoning as in \cite[Prop 4.4]{martinskoro}, it is enough to show that there are at most countably many choices for the rank and the discriminant of $T(Y)$. The rank is clearly fixed, so we have only to explain how the discriminant may vary. We notice that the discriminant of $T(Y)$ has to be equal to the discriminant of $T(X)$ modulo $(\Q^*)^2$, since the quadratic forms are non-degenerate, and so there are countably many choices.
\end{proof}

\begin{lemma}\label{descenttoqbar}
Let $X/\C$ be a K3 surface such that the set
\begin{displaymath}
\{\leftidx{^\sigma}{X}\}/\text{isomorphism}
\end{displaymath}
varying $\sigma \in \Aut(\mathbb{C} /  \Qbar)$ is at most countable. Then there exists a K3 surface $Y/\Qbar$ such that $Y \times_{\Qbar} \C$ is isomorphic to $X$.
\end{lemma}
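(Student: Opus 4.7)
The plan is to deduce this descent statement as a specialization of a general principle of González-Diez, whose core is the elementary observation: for any quasi-projective variety $V/\Qbar$, a point $q \in V(\C)$ lies in $V(\Qbar)$ if and only if its $\Aut(\C/\Qbar)$-orbit is at most countable. To see the non-trivial direction, let $W \subset V_{\Qbar}$ be the Zariski closure of $q$; then $q$ corresponds to an embedding of the residue field $\kappa(\eta_W)$ into $\C$ over $\Qbar$, and if $\dim W \geq 1$ the field $\kappa(\eta_W)$ has positive transcendence degree over $\Qbar$, so admits $2^{\aleph_0}$ embeddings into $\C$. Any two such embeddings differ by an element of $\Aut(\C/\Qbar)$ (since a $\Qbar$-isomorphism between subfields of $\C$ extends to an automorphism of $\C/\Qbar$), which yields an uncountable orbit; contrapositively, $q \in V(\Qbar)$.

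To apply this, I would fix a primitive ample line bundle $H$ on $X$, of degree $2d = H^2 > 0$, and pass to the $\C$-point $p = [(X,H)] \in \mathcal{F}_{2d}(\C)$ of the quasi-projective coarse moduli space $\mathcal{F}_{2d}$ of primitively polarized complex K3 surfaces of degree $2d$, which is defined over $\Q$. For each abstract K3 surface $Y$, the set of primitive ample classes of degree $2d$ inside the finitely generated lattice $\NS(Y)$ is countable, so the hypothesis that the set of isomorphism classes $\{\leftidx{^\sigma}{X}\}_\sigma$ is countable propagates to countability of the orbit $\{\sigma \cdot p = [(\leftidx{^\sigma}{X}, \leftidx{^\sigma}{H})] : \sigma \in \Aut(\C/\Qbar)\}$ in $\mathcal{F}_{2d}(\C)$. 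The principle above then forces $p \in \mathcal{F}_{2d}(\Qbar)$.

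The final step is to lift this $\Qbar$-rational moduli point to a K3 surface genuinely defined over $\Qbar$. Since $\mathcal{F}_{2d}$ is a coarse, not fine, moduli space, I would rigidify by passing to the moduli of primitively polarized K3 surfaces equipped with a full level-$N$ structure on the primitive $H^2$ for $N \geq 3$; this kills all polarization-preserving automorphisms and is representable by a fine moduli scheme $\widetilde{\mathcal{F}}_{2d,N}$ finite étale over $\mathcal{F}_{2d}$. The $\Qbar$-point $p$ lifts to a $\Qbar$-point of $\widetilde{\mathcal{F}}_{2d,N}$ because $\Qbar$ is algebraically closed, and such a point tautologically corresponds to a polarized K3 with level structure defined over $\Qbar$; forgetting the extra data yields the desired model $Y/\Qbar$ of $X$.

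The main obstacle is precisely this coarse-to-fine passage: a priori one might worry that the automorphisms of K3 surfaces obstruct descent of a $\Qbar$-rational moduli point to an actual variety. The rigidification by level structure, which is standard for K3 surfaces, is the cleanest way to sidestep it, but one could alternatively replace $\mathcal{F}_{2d}$ by a suitable Hilbert scheme together with a GIT quotient and invoke a Matsusaka--Mumford style descent for polarized varieties. Either way, the dimension-theoretic heart of the González-Diez principle, namely that a transcendental moduli point necessarily has continuum-many $\Aut(\C/\Qbar)$-conjugates, is the essential input.
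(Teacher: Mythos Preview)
Your proof is correct and rests on the same core principle the paper invokes: Gonz\'alez-Diez's criterion that a complex projective variety descends to $\Qbar$ if and only if its $\Aut(\C/\Qbar)$-conjugates fall into at most countably many isomorphism classes. The paper simply cites this criterion as a black box (the equivalence a) $\Leftrightarrow$ c) in \cite[Criterion~1]{MR2253591}), whereas you unpack the transcendence-degree argument and run it through the coarse moduli space $\mathcal{F}_{2d}$ of primitively polarised K3 surfaces, then rigidify via level structure to lift a $\Qbar$-rational moduli point to an honest K3 over $\Qbar$.

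The trade-offs are worth recording. Gonz\'alez-Diez's argument, as cited, goes through Hilbert schemes and therefore applies uniformly to all irreducible projective varieties; this is exactly why the paper can recycle it verbatim for abelian varieties in Remark~\ref{isogenyandabvar} while explicitly noting that no moduli space is needed. Your route is more self-contained for the reader who already knows K3 moduli, but it imports nontrivial inputs: quasi-projectivity of $\mathcal{F}_{2d}$ over $\Q$, representability of the level-$N$ moduli problem, and finiteness of the polarised automorphism group (needed so that Minkowski's lemma on the kernel of reduction mod $N\geq 3$ actually kills all automorphisms). One small inaccuracy: the forgetful map $\widetilde{\mathcal{F}}_{2d,N}\to \mathcal{F}_{2d}$ between coarse spaces need not be \'etale at points with extra automorphisms; however, finite and surjective is all you use, since $\Qbar$ is algebraically closed, so the argument is unaffected.
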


\begin{proof}
This is by no means specific to K3 surfaces. Indeed it follows from a more general result due to Gonz\'{a}lez-Diez (and the fact that $\Qbar$ is countable). Let $X$ be an irreducible complex projective variety, in \cite[Criterion 1 (page 3)]{MR2253591} is proven that the following are equivalent:
\begin{itemize}
\item[a)] $X$ can be defined over $\Qbar$;
\item[b)] The set $\{\leftidx{^\sigma}{X}: \sigma \in \Aut(\mathbb{C} /  \Qbar)\}$ contains only finitely many isomorphism classes of complex projective varieties;
\item[c)] The set  $\{\leftidx{^\sigma}{X}: \sigma \in \Aut(\mathbb{C} /  \Qbar)\}$ contains only countably many isomorphism classes of complex projective varieties.
\end{itemize}
\end{proof}
\begin{rmk}\label{isogenyandabvar}
Using Gonz\'{a}lez-Diez's result as above, we can also offer another proof of \cite[Lemma 3.6]{MR3544295}, which, for example, does not  invoke the existence of the moduli space of abelian varieties (with some extra structure) established by Mumford in \cite[Part II, Section 6]{MR0204427}. Let $A/\C$ be an abelian variety such that all its $\Aut(\C / \Qbar)$-conjugates are isogenous to $A$. In particular the set $\{\leftidx{^\sigma}{A}: \sigma \in \Aut(\mathbb{C} /  \Qbar)\}$ is contained in the set of complex abelian varieties isogenous to $A$ which, up to isomorphism, is a countable set. As above, the implication c) $\Rightarrow$ a) shows that $A$ can be defined over $\Qbar$. 
\end{rmk}

\begin{proof}[Proof of Theorem \ref{main2}]
Thanks to Lemma \ref{countable}, we may apply Lemma \ref{descenttoqbar} which produces the desired model of $X$.
\end{proof}

\bibliographystyle{alpha}
\bibliography{biblio.bib}

\Addresses

\end{document}